\theoremstyle{plain}
\newtheorem{theorem}{Theorem}
\newtheorem{corollary}[theorem]{Corollary}
\newtheorem{lemma}[theorem]{Lemma}
\newtheorem{proposition}[theorem]{Proposition}
\theoremstyle{definition}
\newtheorem{remark}[theorem]{Remark}
\renewcommand{\ge}{\geqslant}
\begin{document}
\baselineskip 18pt

\title[Bounded orthomorphisms]
      {Bounded orthomorphisms between locally solid vector lattices}

\author[R. ~Sabbagh]{Raheleh Sabbagh}
\author[O.~Zabeti]{Omid Zabeti}


\address[R. ~Sabbagh and O.~Zabeti]
  {Department of Mathematics, Faculty of Mathematics, Statistics, and Computer science,
   University of Sistan and Baluchestan, Zahedan,
   P.O. Box 98135-674. Iran}
\email{o.zabeti@gmail.com}
\email{sabaghrahele@gmail.com}
\keywords{Orthomorphism, bounded orthomorphism, $f$-algebra, locally solid vector lattice.}
\subjclass[2010]{46A40, 47B65, 46A32.}

\begin{abstract}
The main aim of the present note is to consider bounded orthomorphisms between locally solid vector lattices. We establish a version of the remarkable Zannen theorem regarding equivalence between orthomomorphisms and the underlying vector lattice to the case of all bounded orthomomorphisms. Furthermore, we investigate topological and ordered structures for these classes of orthomorphisms, as well.
\end{abstract}

\date{\today}

\maketitle
\section{motivation and introduction}
Let us start with some motivation. Suppose $X$ is an Archimedean vector lattice and $Orth(X)$ is the space of all orthomorphisms on $X$. This space has many important consequences using just the order structure ( see \cite[Section 2.3]{AB}). One of the most remarkable advantages in $Orth(X)$ is the pointwise lattice operations formulae for calculating the suprema or infima of these operators. On the other hand, there are several non-equivalent ways to define bounded operators between locally solid vector lattices; furthermore, these spaces have some ordered and topological structures, as well ( see \cite{EGZ, Z1} for a detailed exposition).
Therefore, it is natural to expect some special properties from bounded orthomorphisms defined on a locally solid vector lattice. This is what our paper is about. We shall consider some topological and ordered structures for different types of bounded orthomorphisms. In particular, we establish a version of the known Zannen theorem ( \cite[Theorem 2.62]{AB}) to each category of bounded orthomorphisms. Moreover, we investigate topologically and ordered closedness for these classes of operators, as well. Now, let us recall some preliminaries we need in the sequel.

A vector lattice $X$ is called {\em order complete} if every non-empty bounded above subset of $X$ has a supremum and $X$ is {\em Archimedean} if $nx\leq y$ for each $n\in \Bbb N$ implies that $x\leq 0$. It is known that every order complete vector lattice is Archimedean. A set $S\subseteq X$ is called a {\em solid} set if $x\in X$, $y\in S$ and $|x|\leq |y|$ imply that $x\in S$. Also, recall that a linear topology $\tau$ on a vector lattice $X$ is referred to as  {\em locally solid} if it has a local basis at zero consisting of solid sets.

Suppose $X$ is a locally solid vector lattice. A net $(x_{\alpha})\subseteq X$ is said to be {\em order} convergent to $x\in X$ if there exists a net $(z_{\beta})$ ( possibly over a different index set) such that $z_{\beta}\downarrow 0$ and for every $\beta$, there is an $\alpha_0$ with $|x_{\alpha}-x|\leq z_{\beta}$ for each $\alpha\ge \alpha_0$. A set $A\subseteq X$ is called {\em order closed} if it contains limits of all order convergent nets which lie in $A$.
 Keep in mind that a topology $\tau$ on a vector lattice $X$ making it a locally solid vector lattice is referred to as {\em Fatou} if it has a local basis at zero consisting of solid order closed neighborhoods. In this case, we say that $X$ has the {\em Fatou property}.
 Observe that a locally solid vector lattice $(X,\tau)$ is said to have the {\em Levi property} if every $\tau$-bounded upward directed set in $X_{+}$ has a supremum.
 Finally, recall that a locally solid vector lattice $(X,\tau)$ possesses the {\em Lebesgue property} if for every net $(u_{\alpha})$ in $X$, $u_{\alpha}\downarrow 0$ implies that $u_{\alpha}\xrightarrow{\tau}0$.
 
Recall that, for an Archimedean vector lattice $X$, by $Orth(X)$, we mean the space of all orthomorphisms on $X$; more precisely, an order bounded band preserving operator on $X$ is called an orthomorphism. For more details on this subject, see \cite{AB}. Observe that a linear operator $T$ on a vector lattice $X$ is called band preserving if $x\bot y$ in $X$ implies that $T(x)\bot y$. Note that by $x\bot y$, we mean $|x|\wedge |y|=0$. Now, suppose $X$ is a locally solid vector lattice. An orthomorphism $T$ on $X$ is called $nb$-bounded if there exists a zero neighborhood $U\subseteq X$ such that $T(U)$ is bounded in $X$; $T$ is said to be $bb$-bounded provided that it maps bounded sets into bounded sets. 
First, observe that compatible with the different spaces of all bounded operators on $X$, these notions of bounded orthomorphisms are not equivalent, in general. Consider the identity operator on ${\Bbb R}^{\Bbb N}$; it is $bb$-bounded and continuous but not $nb$-bounded. Suppose $X$ is $c_{00}$ with the norm topology. Consider the orthomorphism $T$ on $X$ defined via $T((x_n))=(nx_n)$; indeed, it is neither $bb$-bounded nor continuous.

The class of all $nb$-bounded orthomorphisms on $X$  is denoted by $Orth_{n}(X)$ and is equipped with the topology of uniform convergence on some zero neighborhood, namely, a net $(S_{\alpha})$ of $nb$-bounded orthomorphisms converges to zero on some zero neighborhood $U\subseteq X$ if for any zero neighborhood $V\subseteq X$ there is an $\alpha_0$ such that $S_{\alpha}(U) \subseteq V$ for each $\alpha\geq\alpha_0$. The class of all $bb$-bounded orthomorphisms on $X$ is denoted by $Orth_{b}(X)$ and is allocated to the topology of uniform convergence on bounded sets. Recall that a net $(S_{\alpha})$ of $bb$-bounded orthomorphisms uniformly converges to zero on a bounded set $B\subseteq X$ if for any zero neighborhood $V \subseteq X$ there is an $\alpha_0$ with $S_{\alpha}(B) \subseteq V$ for each $\alpha\geq\alpha_0$.

The class of all continuous orthomorphisms on $X$ is denoted by $Orth_{c}(X)$ and is equipped with the topology of equicontinuous convergence, namely, a net $(S_{\alpha})$ of continuous orthomorphisms converges equicontinuously to zero if for each zero neighborhood $V\subseteq X$ there is a zero neighborhood $U\subseteq X$ such that for every $\varepsilon>0$ there exists an $\alpha_0$ with $S_{\alpha}(U)\subseteq \varepsilon V$ for each $\alpha\geq\alpha_0$. See \cite{Tr} for a detailed exposition on these classes of operators. In general, we have $Orth_n(X)\subseteq Orth_c(X)\subseteq Orth_b(X)$ and when $X$ is locally bounded, they coincide.

 Furthermore, suppose $X$ is a locally solid vector lattice. We say that $X$ has the {\em $AM$-property} provided that for every bounded set $B\subseteq X$, $B^{\vee}$ is also bounded with the same scalars; namely, given a zero neighborhood $V$ and any positive scalar $\alpha$ with $B\subseteq \alpha V$, we have  $B^{\vee}\subseteq \alpha V$. Observe that by $B^{\vee}$, we mean the set of all finite suprema of elements of $B$; for ample information, see \cite{Z1}.

 All vector lattices in this note are assumed to be Archimedean. For undefined terminology and related topics, see \cite{AB1,AB}.
\section{main result}
First, we have the following useful facts.
\begin{lemma}\label{10}
Suppose $X$ is a locally solid vector lattice. Then $Orth_{n}(X)$, $Orth_{b}(X)$, and $Orth_{c}(X)$ are vector lattices.
\end{lemma}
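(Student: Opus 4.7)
The plan is to realize each of $Orth_{n}(X)$, $Orth_{b}(X)$, and $Orth_{c}(X)$ as a sublattice of the ambient Archimedean vector lattice $Orth(X)$. Recall the classical theorem (see \cite[Theorem 2.40]{AB}) that for an Archimedean vector lattice $X$ the space $Orth(X)$ is itself an Archimedean vector lattice whose modulus is computed pointwise: $|T|(x)=|T(x)|$ for every $x\in X_{+}$ and every $T\in Orth(X)$. Since $|T|$ is a positive operator, this yields the key inequality $\bigl||T|(x)\bigr|\le |T|(|x|)=|T(x)|$ for all $x\in X$. Each of the three classes is routinely a vector subspace of $Orth(X)$, so the work reduces to showing that $T\mapsto |T|$ preserves each class.

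For $Orth_{n}(X)$, I would fix $T$ which is $nb$-bounded and pick a solid zero neighborhood $U\subseteq X$ (such a neighborhood exists by definition of a locally solid topology) with $T(U)$ bounded. The key inequality gives $\bigl||T|(x)\bigr|\le |T(x)|$ for every $x\in U$. Because the solid hull of a bounded set in a locally solid vector lattice is again bounded, the family $\{|T(x)|:x\in U\}$ is bounded, and hence so is $|T|(U)$. Therefore $|T|\in Orth_{n}(X)$. The argument for $Orth_{b}(X)$ is identical with $U$ replaced by an arbitrary bounded set $B$: $T(B)$ is bounded, so by the same inequality and solidness $|T|(B)$ is bounded.

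For $Orth_{c}(X)$ continuity is handled in the same spirit. Given a solid zero neighborhood $V\subseteq X$, continuity of $T$ furnishes a zero neighborhood $U$, which we may take solid, with $T(U)\subseteq V$. For $x\in U$ we have $T(x)\in V$, whence $|T(x)|\in V$ by solidness, and the inequality $\bigl||T|(x)\bigr|\le |T(x)|$ forces $|T|(x)\in V$ again by solidness. Thus $|T|$ is continuous, i.e., $|T|\in Orth_{c}(X)$.

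The only substantive point, and the natural place for a potential obstacle, is the pointwise formula for the modulus of an orthomorphism; without it there is no reason that the abstract lattice supremum defining $|T|$ in $Orth(X)$ should inherit the topological boundedness enjoyed by $T$. Once that formula is invoked, the solidness of the local base at zero in $X$ reduces all three verifications to the single inequality $\bigl||T|(x)\bigr|\le |T(x)|$, and the three classes are treated uniformly.
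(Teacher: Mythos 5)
Your proposal is correct and follows essentially the same route as the paper: both reduce the problem to showing that the modulus $|T|$, which exists in $Orth(X)$ and is computed pointwise via \cite[Theorem 2.40]{AB} as $|T|(x)=|T(x)|$ for $x\in X_+$, inherits the relevant boundedness or continuity from $T$. You merely spell out the solidness/solid-hull details that the paper dismisses as ``easy to see.''
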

\begin{proof}
It is enough to prove that, in each case, the modulus of the orthomorphism exists and is also bounded. By \cite[Theorem 2.40]{AB}, the modulus of an orthomorphism $T$ exists and satisfies $|T|(x)=|T(x)|$ for each $x\in X_{+}$. Therefore, it is easy to see that if $T$ is either $nb$-bounded or $bb$-bounded or continuous, then so is its modulus.
\end{proof}
Furthermore, we have a domination property, as well.
\begin{proposition}
Suppose $X$ is a locally solid vector lattice and $T,S$ are linear operators on $X$ such that $0\leq T\leq S$. Then we have the following observations.
\begin{itemize}
\item[\em (i)] {If $S\in Orth_{n}(X)$ then  $T\in Orth_{n}(X)$}.
\item[\em (ii)]{If $S\in Orth_{b}(X)$ then  $T\in Orth_{b}(X)$}.
\item[\em (iii)] {If $S\in Orth_{c}(X)$ then  $T\in Orth_{n}(X)$}.
\end{itemize}
\end{proposition}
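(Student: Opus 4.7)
The plan is to reduce all three assertions to one domination inequality. First, we verify that $T$ itself is an orthomorphism. For $x,y\ge 0$ with $x\wedge y=0$, the sandwich $0\le T(x)\le S(x)$ together with the band-preservation of $S$ give $T(x)\wedge y\le S(x)\wedge y=0$; for general $x$, decomposing $x=x^+-x^-$ and using $x^+\wedge|y|=x^-\wedge|y|=0$ shows $T(x^+)\wedge|y|=T(x^-)\wedge|y|=0$, whence $|T(x)|\wedge|y|\le(T(x^+)+T(x^-))\wedge|y|=0$. Order boundedness is immediate from $0\le T\le S$, so $T\in Orth(X)$.

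Since $T\ge 0$ is an orthomorphism, the modulus formula used in the proof of Lemma~\ref{10}, applied to $T$ itself, gives $T(|x|)=|T(x)|$ for every $x\in X$; combined with $T\le S$ this yields
\[
|T(x)|\;=\;T(|x|)\;\le\;S(|x|)\;=\;|S(x)| \qquad (x\in X),
\]
which is the only quantitative tool we shall need.

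For (i), choose a solid zero neighborhood $U$ with $S(U)$ bounded. Given any solid zero neighborhood $V$, pick $\lambda>0$ with $S(U)\subseteq \lambda V$. For $x\in U$, solidity of $U$ gives $|x|\in U$, hence $S(|x|)\in\lambda V$; the displayed inequality together with the solidity of $\lambda V$ then forces $T(x)\in\lambda V$. Thus $T(U)\subseteq\lambda V$ and $T\in Orth_n(X)$. Case (ii) is identical, replacing $U$ by an arbitrary bounded set $B$ and using that the solid hull of a bounded set in a locally solid vector lattice is again bounded. Case (iii)---whose stated conclusion $T\in Orth_n(X)$ looks like a misprint for $T\in Orth_c(X)$---reduces to the same estimate: $x_\alpha\to 0$ forces $|x_\alpha|\to 0$, so $S(|x_\alpha|)\to 0$ by continuity of $S$, and local solidity of the topology then gives $T(x_\alpha)\to 0$.

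The only mildly subtle point is the first step, verifying band-preservation of $T$; once this and the modulus formula are in hand each of (i)--(iii) is a short solidity manipulation, and I do not foresee any real obstacle.
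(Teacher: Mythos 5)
Your proof is correct and follows essentially the same route as the paper's: both arguments rest on the domination $0\le |T(x)|\le T(|x|)\le S(|x|)$ together with solidity of the relevant neighborhoods (or of the solid hull of a bounded set) to get boundedness/continuity, and both verify band preservation of $T$ by the same disjointness estimate against $S$. You are also right that the stated conclusion of (iii) is a misprint for $T\in Orth_{c}(X)$; the paper's own proof of (iii) indeed establishes continuity of $T$.
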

\begin{proof}
$(i)$. There exists a solid zero neighborhood $U\subseteq X$ such that $S(U)$ is bounded; moreover $S$ is order bounded and band preserving. We need show that $T$ is also order bounded, band preserving and $nb$-bounded. Choose arbitrary solid zero neighborhood $V\subseteq X$. Find scalar $\gamma >0$ with $S(U)\subseteq \gamma V$. For each $x\in U_{+}$, we have $0\leq T(x)\leq S(x)\in \gamma V$ so that $T(x)\in V$ since $V$ is solid. Since $U\subseteq U_{+}-U_{-}$, we conclude that $T(U)$ is also bounded. It is clear that $T$ is also order bounded. Now, suppose $x,y\in X$ such that $x\bot y$. By \cite[Theorem 2.36 and Theorem 2.40]{AB}, we have
\[|T(x)|\wedge |y|=T(|x|)\wedge |y|\leq S(|x|)\wedge |y|=|S(x)|\wedge |y|=0,\]
so that $T(x)\bot y$.

$(ii)$. It is similar to the part $(i)$; just if necessary, replace a bounded set $B$ with its solid hull which is also bounded and use the inclusion $B\subseteq B_{+}-B_{-}$.

$(iii)$. Similar to the part $(i)$, we conclude that $T$ is a orthomorphism. Choose arbitrary solid zero neighborhood $W\subseteq X$. Find solid zero neighborhood $V\subseteq X$ with $V-V\subseteq W$. There exists a solid zero neighborhood $U\subseteq X$ with $S(U)\subseteq V$ so that $S(x)\in V$ for each positive $x\in U$. This implies that $T(x)\in V$, as well; since $V$ is solid. Therefore, $T(U_{+})\subseteq V$. We conclude that $T(U)\subseteq T(U_{+})-T(U_{-})\subseteq V-V\subseteq W$, as claimed.
\end{proof}
\begin{proposition}
Suppose $X$ is a locally solid vector lattice. Then $Orth_{n}(X)$, $Orth_{b}(X)$ and $Orth_{c}(X)$ are locally solid vector lattices.
\end{proposition}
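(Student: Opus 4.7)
The plan is to exhibit, for each of the three topologies, a local base at the origin consisting of solid subsets of the corresponding operator lattice; together with Lemma~\ref{10}, which already provides the vector lattice structure, this is exactly what is required for each space to be locally solid.

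The technical core of the argument is the following purely lattice-theoretic observation. For any solid set $A\subseteq X$ and any solid zero neighborhood $V\subseteq X$, the set
\[
W(A,V):=\{T\in Orth(X):T(A)\subseteq V\}
\]
is solid in $Orth(X)$. To verify this I would take $T\in W(A,V)$ and $|S|\le |T|$; for $x\in A$, solidity of $A$ gives $|x|\in A$, so $T(|x|)\in V$, and solidity of $V$ gives $|T(|x|)|\in V$. The orthomorphism identity $|T|(|x|)=|T(|x|)|$ from \cite[Theorem~2.40]{AB}, combined with the universally valid pointwise bound $|S(x)|\le |S|(|x|)\le |T|(|x|)$, yields $|S(x)|\le |T(|x|)|$. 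A second appeal to solidity of $V$ places $S(x)$ in $V$, so $S\in W(A,V)$.

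With this key lemma in hand I would specialise to each of the three topologies. For $Orth_n(X)$ a local base at zero is given by the sets $W(U,V)$ with $U,V$ solid zero neighborhoods of $X$. For $Orth_b(X)$ one uses $W(B,V)$ with $B$ a solid bounded set and $V$ a solid zero neighborhood, invoking that in a locally solid vector lattice the solid hull of a bounded set is again bounded. For $Orth_c(X)$ one uses sets of the same form $W(U,V)$ extracted from the definition of equicontinuous convergence, where the solid zero neighborhood $U$ is chosen in response to the target solid zero neighborhood $V$. In every case the base consists of solid sets by the previous paragraph, so the claim follows.

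The hardest step will be the third case, since the equicontinuous topology is not the topology of uniform convergence on a fixed bornology of test sets: one must unpack the convergence definition and check that the sets of the form $W(U,V)$ actually generate a local base, with the subtlety that the "test" neighborhood $U$ must be adapted to the target $V$. Once this bookkeeping is settled, the solidity of the resulting base follows verbatim from the key observation, and the topological--lattice compatibility reduces entirely to that statement.
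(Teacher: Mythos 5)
Your key lemma is correct, and its verification is essentially the same computation the paper performs: for solid $A\subseteq X$ and a solid zero neighborhood $V\subseteq X$, the set $W(A,V)=\{T\in Orth(X): T(A)\subseteq V\}$ is solid, because $|S|\le |T|$ gives $|S(x)|\le |S|(|x|)\le |T|(|x|)=|T(|x|)|$ by \cite[Theorem 2.40]{AB}, and then solidity of $A$ and $V$ finishes the argument. The paper packages this same inequality differently: rather than exhibiting a solid base, it shows that $T_{\alpha}\to 0$ implies $|T_{\alpha}|\to 0$ in each of the three convergences and invokes \cite[Theorem 2.17]{AB1} (uniform continuity of the lattice operations characterizes local solidity), which operates directly at the level of net convergence and never requires an explicit description of the neighborhood filter. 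For $Orth_{b}(X)$ your route is complete and arguably cleaner, since the topology of uniform convergence on bounded sets genuinely admits $\{W(B,V): B \mbox{ solid bounded},\ V \mbox{ solid}\}$ as a local base (the solid hull of a bounded set being bounded).

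The gap lies in the cases $Orth_{n}(X)$ and $Orth_{c}(X)$, and it is exactly the ``bookkeeping'' you defer in your last paragraph. The family $\{W(U,V): U,V \mbox{ solid zero neighborhoods}\}$ is \emph{not} a local base for the topology of uniform convergence on \emph{some} zero neighborhood: a net eventually lies in every $W(U,V)$ if and only if it converges uniformly on \emph{every} zero neighborhood, whereas the defining condition of this topology places an existential quantifier over $U$ outside the universal quantifier over $V$. Concretely, for $X=\mathbb{R}^{\mathbb{N}}$ with the product topology, $Orth_{n}(X)$ consists of the finitely supported diagonal operators; the sequence $T_{k}=\frac{1}{k}\,\mathrm{diag}(1,0,0,\dots)$ converges to zero uniformly on $U=\{x: |x_{1}|<1\}$, yet $T_{k}\notin W(U',V)$ for any $k$ when $U'=\{x:|x_{2}|<1\}$ and $V=\{x:|x_{1}|<1\}$, since $U'$ does not constrain the first coordinate. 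So the topology generated by all the $W(U,V)$ is strictly finer than the intended one, and solidity of those sets does not by itself settle the claim. The same quantifier mismatch affects the equicontinuous topology, where $U$ must be adapted to $V$ and to the tail of the net. To repair the argument you would either have to identify a correct local base for these two topologies (and check its members are solid, e.g.\ as suitable unions or intersections of sets $W(A,V)$, which remain solid), or fall back on the paper's device of verifying $T_{\alpha}\to 0\Rightarrow |T_{\alpha}|\to 0$ on nets and citing \cite[Theorem 2.17]{AB1}.
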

\begin{proof}
For $Orth_{n}(X)$: by Lemma \ref{10}, $Orth_{n}(X)$ is an Archimedean vector lattice. By \cite[Theorem 2.17]{AB1}, it suffices to prove that the lattice operations in $Orth_{n}(X)$ are uniformly continuous. Suppose $(T_{\alpha})$ is a net of $nb$-bounded orthomorphisms on $X$ which converges uniformly on some zero neighborhood to zero. It is enough to show that $|T_{\alpha}|\rightarrow 0$. There exists a zero neighborhood $U\subseteq X$ such that for each zero neighborhood $V\subseteq X$ there is an $\alpha_0$ such that $T_{\alpha}(U)\subseteq V$ for each $\alpha\geq \alpha_0$.
So, for each $x\in U_{+}$, by \cite[Theorem 2.40]{AB}, $|T_{\alpha}|(x)=|T_{\alpha}(x)|\in V$ for sufficiently large $\alpha$. Note that $U$ and $V$ are solid so that the proof would be complete.

The proof for $Orth_{b}(X)$ is similar to the case of $Orth_{n}(X)$; just, we may assume that every bounded set $B\subseteq X$ is solid otherwise, consider the solid hull of $B$ which is bounded, certainly.

For $Orth_{c}(X)$: by Lemma \ref{10}, $Orth_{c}(X)$ is an Archimedean vector lattice. By \cite[Theorem 2.17]{AB1}, it suffices to prove that the lattice operations in $Orth_{c}(X)$ are uniformly continuous. Suppose $(T_{\alpha})$ is a net of continuous orthomorphisms on $X$ which converges equicontinuously to zero. It is enough to show that $|T_{\alpha}|\rightarrow 0$. For every arbitrary solid zero neighborhood $V\subseteq X$, there exists a solid zero neighborhood $U\subseteq X$ such that for each $\varepsilon>0$, there is an $\alpha_0$ with $T_{\alpha}(U)\subseteq \varepsilon V$ for each $\alpha\geq \alpha_0$.
So, for each $x\in U_{+}$, by \cite[Theorem 2.40]{AB}, $|T_{\alpha}|(x)=|T_{\alpha}(x)|\in \varepsilon V$ for sufficiently large $\alpha$. Note that $U$ and $V$ are solid so that the proof would be complete.
\end{proof}
\begin{theorem}\label{1}
Suppose $X$ is a locally solid vector lattice. If $X$ possesses the $AM$-property, then so is $Orth_{n}(X)$, $Orth_{b}(X)$ and $Orth_{c}(X)$.
\end{theorem}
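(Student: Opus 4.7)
The plan is to treat the three cases in parallel, using the fact (established by the previous proposition) that each $Orth_{*}(X)$ is locally solid and admits a basis of solid zero neighborhoods of the common form
\[
\mathcal{N}(A,V) := \{T \in Orth_{*}(X) \mid T(A) \subseteq V\},
\]
where $V$ is a solid zero neighborhood of $X$ and $A$ is, respectively, a solid zero neighborhood of $X$ for $Orth_n(X)$, a solid bounded subset of $X$ for $Orth_b(X)$, and (read off from the preceding local-solidness proof) an $\varepsilon$-scaled solid zero neighborhood for $Orth_c(X)$. Since solid zero neighborhoods form a basis in a locally solid vector lattice, it suffices to verify the $AM$-property against such $\mathcal{V} = \mathcal{N}(A,V)$.

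I would first confirm that $\mathcal{N}(A,V)$ really is solid in $Orth_{*}(X)$. The key ingredient is the identity $|T(x)| = |T|(|x|)$, valid for every orthomorphism $T$ and every $x \in X$; this follows from \cite[Theorem 2.40]{AB} together with the fact that band-preservation forces $T(x^{+}) \perp T(x^{-})$. Combining this identity with solidness of $A$ and $V$, the inequality $|T_1| \le |T_2|$ together with $T_2 \in \mathcal{N}(A,V)$ transports to $T_1 \in \mathcal{N}(A,V)$.

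Next I would perform the main computation. Fix a bounded set $\mathcal{B} \subseteq Orth_{*}(X)$, a basic solid zero neighborhood $\mathcal{V} = \mathcal{N}(A,V)$, and a scalar $\alpha > 0$ with $\mathcal{B} \subseteq \alpha \mathcal{V}$; concretely, $T(A) \subseteq \alpha V$ for every $T \in \mathcal{B}$. Given $T_1,\ldots,T_n \in \mathcal{B}$, set $T^{*} = T_1 \vee \cdots \vee T_n$ and aim to show $T^{*}(A) \subseteq \alpha V$. For $x \in A_{+}$, the pointwise supremum formula for orthomorphisms (iterating \cite[Theorem 2.40]{AB}) gives $T^{*}(x) = T_1(x) \vee \cdots \vee T_n(x)$; since the finite set $\{T_i(x)\}_{i=1}^{n}$ sits inside $\alpha V$, the $AM$-property of $X$ applied to this (trivially bounded) set yields $T^{*}(x) \in \alpha V$. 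For general $x \in A$, solidness of $A$ gives $|x| \in A_{+}$, so the previous step produces $T^{*}(|x|) \in \alpha V$, and then
\[
|T^{*}(x)| = |T^{*}|(|x|) = |T^{*}(|x|)| \in \alpha V
\]
by the orthomorphism identity combined with solidness of $\alpha V$. A final appeal to solidness of $\alpha V$ gives $T^{*}(x) \in \alpha V$. Hence $T^{*} \in \mathcal{N}(A, \alpha V) = \alpha \mathcal{V}$, and since $T^{*}$ was arbitrary in $\mathcal{B}^{\vee}$, we conclude $\mathcal{B}^{\vee} \subseteq \alpha \mathcal{V}$.

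The step I anticipate as the main subtlety is pinning down the correct basis of solid zero neighborhoods for $Orth_c(X)$, whose ``equicontinuous convergence'' topology is more delicate than those of $Orth_n(X)$ and $Orth_b(X)$. Once this basis is extracted from the local-solidness argument of the preceding proposition, the three-line computation above transfers verbatim, and the $AM$-property of $X$ lifts uniformly to each of $Orth_n(X)$, $Orth_b(X)$, and $Orth_c(X)$.
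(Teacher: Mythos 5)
Your proposal is correct and takes essentially the same route as the paper's proof: both reduce the claim to the pointwise identity $(T_1\vee\cdots\vee T_n)(x)=T_1(x)\vee\cdots\vee T_n(x)$ for orthomorphisms and then apply the $AM$-property of $X$ to the finite set $\{T_i(x)\}\subseteq\alpha V$ for each $x$ in the relevant neighborhood or bounded set. Your extra care with non-positive $x$ (via $|T(x)|=|T|(|x|)$ and solidness) and with the solid neighborhood basis of $Orth_c(X)$ only makes explicit what the paper leaves implicit.
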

\begin{proof}
For $Orth_{n}(X)$: assume that $X$ possesses the $AM$-property and $D\subseteq Orth_{n}(X)$ is bounded. This means that $D$ is bounded uniformly on some zero neighborhood. So, there is a zero neighborhood $U\subseteq X$ such that for arbitrary zero neighborhood $V\subseteq X$, there exists a positive scalar $\alpha$ such that $\{T(U): T\in D\}\subseteq \alpha V$. In particular, for any $x\in U$, the set $B_{x}=\{T(x): T\in D\}\subseteq \alpha V$. This implies that, by the assumption, ${B_{x}}^{\vee}$ is also bounded in $X$. Again, by \cite[Theorem 2.43]{AB} and using \cite[Lemma 3]{Z2}, $(T_1\vee\ldots\vee T_n)(x)=T_1(x)\vee\ldots\vee T_n(x)\in \alpha V\vee\ldots\vee \alpha V=\alpha V$ for each $T_1,\ldots,T_n\in D$. So, $(T_1\vee\ldots\vee T_n)(U)\subseteq \alpha V$. This would complete the proof.

The proof for $Orth_{b}(X)$ is similar to the case of $Orth_{n}(X)$; just, we may assume that every bounded set $B\subseteq X$ is solid otherwise, consider the solid hull of $B$ which is bounded, certainly.

For $Orth_{c}(X)$: assume that $X$ possesses the $AM$-property and $D\subseteq Orth_{c}(X)$ is bounded. This means that $D$ is bounded equicontinuously. Therefore, for arbitrary zero neighborhood $V\subseteq X$, there exists a zero neighborhood $U\subseteq X$ with $T(U)\subseteq V$ for each $T\in D$.  In particular, for any $x\in U$, the set $B_{x}=\{T(x): T\in D\}\subseteq  V$. This implies that, by the assumption, ${B_{x}}^{\vee}$ is also bounded in $X$. Again, by \cite[Theorem 2.43]{AB} and using \cite[Lemma 3]{Z2}, $(T_1\vee\ldots\vee T_n)(x)=T_1(x)\vee\ldots\vee T_n(x)\in V\vee\ldots\vee V= V$ for each $T_1,\ldots,T_n\in D$. So, $(T_1\vee\ldots\vee T_n)(U)\subseteq  V$. This would complete the proof.
\end{proof}
\begin{remark}
Observe that the converse of Theorem \ref{1} is not true, in general. Consider $X=\ell_1$; it does not have the $AM$-property. By \cite[Theorem 4.75 and Theorem 4.77]{AB} , $Orth_n(X)$ is an $AM$-space with unit so that a $C(K)$-space for some compact Hausdorff space $K$. Therefore, $Orth_{n}(X)$ possesses the $AM$-property.
\end{remark}
Nevertheless, when we consider a locally solid $f$-algebra $X$, we can have the converse, as well. Before this, we have two useful facts. Recall that by an $f$-algebra $X$, we mean a Riesz algebra such that given $x,y\in X$ with $x\wedge y=0$, we have $zx\wedge y=xz\wedge y=0$ for each positive $z\in X$; for a comprehensive context, see \cite[Section 2.3]{AB}.
\begin{theorem}\label{4}
Suppose $X$ is a locally solid $f$-algebra with a multiplication unit $e$. Then there is an $f$-algebra isomorphism homeomorphism from $X$ onto $Orth_{b}(X)$.
\end{theorem}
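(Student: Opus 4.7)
The plan is to define the map $\pi\colon X \to Orth_b(X)$ by $\pi(x)(y) = xy$ and to show that $\pi$ is simultaneously an $f$-algebra isomorphism and a homeomorphism. I would organize the argument into three stages: first verify that $\pi(x)\in Orth_b(X)$, then that $\pi$ is a bijective $f$-algebra homomorphism, and finally that $\pi$ is a homeomorphism.

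For the first stage, band preservation of $M_x := \pi(x)$ is immediate from the defining axiom of an $f$-algebra, and order boundedness follows from the standard inequality $|xy|\le |x|\,|y|$, so $M_x\in Orth(X)$. For $bb$-boundedness of $M_x$, given a bounded set $B\subseteq X$ and a solid zero neighborhood $V$, I would replace $B$ by its (still bounded) solid hull, apply the estimate $|xy|\le |x|\,|y|$ together with solidity of $V$, and use the absorbing property of bounded sets to extract a scalar $\alpha>0$ with $xB\subseteq \alpha V$.

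For the second stage, the homomorphism property is a direct consequence of associativity, distributivity, and commutativity of multiplication in the $f$-algebra, together with Lemma \ref{10} for compatibility with the lattice structure. Injectivity is trivial, since $\pi(x)(e)=x$, so $M_x=0$ forces $x=0$. Surjectivity is the substance of Zaanen's theorem \cite[Theorem 2.62]{AB}: given $T\in Orth_b(X)\subseteq Orth(X)$, set $x:=T(e)$, and apply the classical argument---using that $e$ is a weak order unit in an $f$-algebra with unit and that $T$ is band preserving---to obtain $T(y)=xy$ for every $y\in X$, whence $T=M_x$.

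For the third stage, continuity of $\pi^{-1}$ is immediate: the singleton $\{e\}$ is bounded in $X$, so if $M_{x_\alpha}\to 0$ in the topology of uniform convergence on bounded sets then $x_\alpha = M_{x_\alpha}(e)\to 0$ in $X$. Continuity of $\pi$ itself is the main obstacle. Given $x_\alpha\to 0$ in $X$, a bounded set $B\subseteq X$, and a solid zero neighborhood $V$, I must produce an index beyond which $x_\alpha B\subseteq V$. My plan is to use solidity together with the $f$-algebra estimate $|x_\alpha y|\le |x_\alpha|\,|y|$ to reduce to uniform control of $|x_\alpha|\,|y|$ as $y$ ranges over the solid hull of $B$, and then to exploit the absorbing property of $B$ (writing $B\subseteq \beta W$ for a suitable zero neighborhood $W$) to convert smallness of $x_\alpha$ into smallness of $x_\alpha B$. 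The most delicate step is coupling the topological convergence $x_\alpha\to 0$ with the boundedness of $B$ in a uniform way, which is where both the locally solid structure and the $f$-algebra identity will be indispensable.
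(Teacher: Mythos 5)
Your proposal follows essentially the same route as the paper: the same multiplication map $x\mapsto M_x$, surjectivity via Zaanen's theorem \cite[Theorem 2.62]{AB} applied to $Orth_b(X)\subseteq Orth(X)$, and continuity of the inverse by evaluating at the bounded singleton $\{e\}$. The step you flag as delicate --- producing an index beyond which $x_\alpha B\subseteq V$ --- is exactly what the paper outsources to \cite[Proposition 2.1]{MZ}, and your sketch (choose a zero neighborhood $W$ with $WW\subseteq V$, absorb $B\subseteq \beta W$, and wait until $x_\alpha\in\beta^{-1}W$) is precisely that argument, so there is no gap.
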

\begin{proof}
By \cite[Theorem 2.62]{AB}, there is an $f$-algebra isomorphism from $X$ onto $Orth(X)$ defined by $u\rightarrow T_u$ such that $T_u(x)=ux$. Now, consider this mapping from $X$ into $Orth_{b}(X)$; note that each $T_u$ is $bb$-bounded using \cite[Proposition 2.1]{MZ}. Furthermore, another using of \cite[Theorem 2.62]{AB}, convinces that this mapping is also onto and an $f$-isomorphism. So, it is enough to show that it is a homeomorphism. Suppose $(u_{\alpha})$ is a null net in $X$. We need show that $T_{u_{\alpha}}$ is null in the topology of uniform convergence on bounded sets. Fix a bounded set $B\subseteq X$ and choose arbitrary zero neighborhood $V\subseteq X$. By another application of \cite[Proposition 2.1]{MZ}, we find a zero neighborhood $U\subseteq X$ such that $UB\subseteq V$. There exists an $\alpha_0$ with $u_{\alpha}\in U$ for each $\alpha\geq\alpha_0$ so that $T_{u_{\alpha}}(B)=u_{\alpha}B\subseteq V$ for sufficiently large $\alpha$. For the converse, suppose $(T_{u_{\alpha}})$ is a null net in $Orth_{b}(X)$. This means that for each $x\in X$, $T_{u_{\alpha}}(x)\rightarrow 0$ in $X$. Put $x=e$, the multiplication unit of $X$. We see that $u_{\alpha}\rightarrow 0$, as claimed.
\end{proof}
\begin{theorem}
Suppose $X$ is a locally solid $f$-algebra with a multiplication unit $e$. Then there is an $f$-algebra isomorphism homeomorphism from $X$ onto $Orth_{c}(X)$.
\end{theorem}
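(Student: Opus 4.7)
The plan is to imitate the proof of Theorem \ref{4}, swapping the topology of uniform convergence on bounded sets for the equicontinuous topology. First I would invoke \cite[Theorem 2.62]{AB} to obtain the $f$-algebra isomorphism $u\mapsto T_u$ from $X$ onto $Orth(X)$, where $T_u(x)=ux$. Then I would verify that every $T_u$ actually lies in $Orth_c(X)$: by the joint continuity of multiplication in a locally solid $f$-algebra (\cite[Proposition 2.1]{MZ}), for any zero neighborhood $V$ there is a zero neighborhood $U$ with $uU\subseteq V$, so each (linear) $T_u$ is continuous at zero and hence continuous. Since every orthomorphism on $X$ has the form $T_u$, the map $u\mapsto T_u$ is a bijection from $X$ onto $Orth_c(X)$, and it is already an $f$-algebra isomorphism by \cite[Theorem 2.62]{AB}.

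It then remains to show that this bijection is a homeomorphism. For the forward direction, suppose $(u_\alpha)$ is a null net in $X$; I want $(T_{u_\alpha})$ to converge equicontinuously to zero. Given an arbitrary zero neighborhood $V\subseteq X$, a second application of \cite[Proposition 2.1]{MZ} supplies zero neighborhoods $W,U\subseteq X$ with $WU\subseteq V$. For any $\varepsilon>0$ I can find $\alpha_0$ so that $u_\alpha\in\varepsilon W$ for each $\alpha\ge\alpha_0$; then $T_{u_\alpha}(U)=u_\alpha U\subseteq\varepsilon WU\subseteq\varepsilon V$, which is precisely the equicontinuous convergence condition (and, crucially, $U$ was chosen before $\varepsilon$).

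For the reverse direction, suppose $(T_{u_\alpha})$ converges equicontinuously to zero. The key observation is that equicontinuous convergence forces pointwise convergence: given a zero neighborhood $V$ with its associated $U$, and any $x\in X$, absorption yields $\lambda>0$ with $x\in\lambda U$, whence for any $\delta>0$, picking $\varepsilon=\delta/\lambda$ in the defining property gives $T_{u_\alpha}(x)\in\lambda\varepsilon V=\delta V$ eventually. Specializing to $x=e$ then yields $u_\alpha = T_{u_\alpha}(e)\to 0$ in $X$, as desired.

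The only genuine obstacle I anticipate is bookkeeping the nested quantifiers in the definition of equicontinuous convergence. In the forward direction one must commit to the zero neighborhood $U$ once and for all (uniformly in $\varepsilon$) and only then choose $\alpha_0$ depending on $\varepsilon$; the bilinear continuity statement of \cite[Proposition 2.1]{MZ}, which furnishes the two neighborhoods $W$ and $U$ with $WU\subseteq V$, is exactly what allows $U$ to be fixed first while the scaling by $\varepsilon$ is absorbed into the $W$-factor via $u_\alpha\in\varepsilon W$.
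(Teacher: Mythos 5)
Your proposal is correct and follows essentially the same route as the paper: the isomorphism from \cite[Theorem 2.62]{AB}, continuity of each $T_u$ via \cite[Proposition 2.1]{MZ}, the forward direction by factoring $V$ through a product of zero neighborhoods so that $u_\alpha\in\varepsilon W$ gives $T_{u_\alpha}(U)\subseteq\varepsilon V$, and the reverse direction by evaluating at $e$. If anything, your argument is slightly more complete, since you actually justify (via absorption) the step that equicontinuous convergence implies pointwise convergence, which the paper asserts without proof.
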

\begin{proof}
The proof has the same line as in the proof of Theorem \ref{4}. By \cite[Theorem 2.62]{AB}, there is an $f$-algebra isomorphism from $X$ onto $Orth(X)$ defined by $u\rightarrow T_u$ such that $T_u(x)=ux$. Now, consider this mapping from $X$ into $Orth_{c}(X)$; note that each $T_u$ is continuous using \cite[Proposition 2.1]{MZ}. Furthermore, another using of \cite[Theorem 2.62]{AB}, convinces that this mapping is also onto and an $f$-isomorphism. So, it is enough to show that it is a homeomorphism. Suppose $(u_{\alpha})$ is a null net in $X$. We need show that $T_{u_{\alpha}}$ is null equicontinuously. Choose arbitrary zero neighborhood $V\subseteq X$. there exists a zero neighborhood $U\subseteq X$ such that $UU\subseteq V$. For each $\varepsilon>0$,  there exists an $\alpha_0$ with $u_{\alpha}\in \varepsilon U$ for each $\alpha\geq\alpha_0$ so that $T_{u_{\alpha}}(U)=u_{\alpha}U\subseteq \varepsilon UU\subseteq \varepsilon V$ for sufficiently large $\alpha$. For the converse, suppose $(T_{u_{\alpha}})$ is a null net in $Orth_{c}(X)$. This means that for each $x\in X$, $T_{u_{\alpha}}(x)\rightarrow 0$ in $X$. Put $x=e$, the multiplication unit of $X$. We see that $u_{\alpha}\rightarrow 0$, as claimed.
\end{proof}
\begin{corollary}\label{2}
Suppose $X$ is a locally solid $f$-algebra with a multiplication unit $e$. If $Orth_b(X)$ possesses the $AM$-property, then so is $X$.
\end{corollary}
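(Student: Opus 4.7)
The plan is to push the $AM$-property from $Orth_b(X)$ back to $X$ along the $f$-algebra isomorphism homeomorphism $\Phi\colon X\to Orth_b(X)$, $u\mapsto T_u$, provided by Theorem \ref{4}. The crucial observation is that the inverse of $\Phi$ is evaluation at the multiplication unit: $\Phi^{-1}(T)=T(e)$, since $T_u(e)=u$. This lets me match not just boundedness but the actual gauge scalar through the correspondence.

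First, I would fix a bounded set $B\subseteq X$ and check that $\Phi(B)=\{T_u\mid u\in B\}$ is bounded in $Orth_b(X)$. Unwinding the topology of uniform convergence on bounded sets, this reduces to verifying that $BB'$ is bounded in $X$ for every bounded $B'\subseteq X$, which is precisely joint continuity of multiplication on bounded sets in a locally solid $f$-algebra (Proposition 2.1 of \cite{MZ}, already invoked in the proof of Theorem \ref{4}). So $\Phi(B)$ is bounded in $Orth_b(X)$, and the $AM$-property of $Orth_b(X)$ is available for it.

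Now let $V\subseteq X$ be a zero neighborhood and $\alpha>0$ with $B\subseteq\alpha V$. The set
\[\mathcal V\::\:=\{T\in Orth_b(X)\mid T(e)\in V\}\]
is a basic zero neighborhood in $Orth_b(X)$ (take the bounded singleton $\{e\}$ and the neighborhood $V$). Since $T_u(e)=u$, the hypothesis $B\subseteq\alpha V$ is the same as $\Phi(B)\subseteq\alpha\mathcal V$. Applying the $AM$-property of $Orth_b(X)$ gives $\Phi(B)^{\vee}\subseteq\alpha\mathcal V$, and because $\Phi$ is a vector lattice isomorphism we have $\Phi(B)^{\vee}=\Phi(B^{\vee})$. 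Evaluating at $e$ then returns $w=T_w(e)\in\alpha V$ for every $w\in B^{\vee}$, so $B^{\vee}\subseteq\alpha V$, which is the $AM$-property for $X$ with the same scalar.

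There is really no serious obstacle here; the whole argument is a transport of structure along $\Phi$. The only thing to keep an eye on is that the scalar $\alpha$ does not get inflated in the translation, and it does not: $\alpha\mathcal V=\{T\mid T(e)\in\alpha V\}$ corresponds under $\text{ev}_e$ to $\alpha V$ on the nose, so the gauge passes cleanly in both directions.
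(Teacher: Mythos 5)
Your proof is correct and follows exactly the route the paper intends: Corollary \ref{2} is stated without proof as an immediate consequence of the isomorphism homeomorphism of Theorem \ref{4}, and your argument is just a careful unwinding of that transport of structure (boundedness of $\Phi(B)$ via joint continuity of multiplication, the gauge scalar via evaluation at $e$, and finite suprema via the lattice isomorphism). Nothing further is needed.
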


\begin{corollary}
Suppose $X$ is a locally solid $f$-algebra with a multiplication unit $e$. If $Orth_c(X)$ possesses the $AM$-property, then so is $X$.
\end{corollary}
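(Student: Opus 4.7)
The approach is a direct transport of structure through the preceding theorem. That theorem furnishes an $f$-algebra isomorphism and simultaneous homeomorphism $\Phi \colon X \to Orth_c(X)$, $u \mapsto T_u$ with $T_u(x) = ux$. Since the $AM$-property is a statement purely about the topological-lattice structure (bounded sets, zero neighborhoods, finite suprema, and scalars), and $\Phi$ preserves each of these ingredients, the property must transfer from $Orth_c(X)$ to $X$. This is exactly the pattern of Corollary \ref{2} for $Orth_b(X)$, adapted to $Orth_c(X)$.

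Concretely, my plan is as follows. Start with an arbitrary bounded set $B \subseteq X$, a zero neighborhood $V \subseteq X$, and a positive scalar $\alpha$ with $B \subseteq \alpha V$. Push forward along $\Phi$: because $\Phi$ is linear and a homeomorphism, $\Phi(V)$ is a zero neighborhood in $Orth_c(X)$ and $\Phi(\alpha V) = \alpha \Phi(V)$, hence $\Phi(B) \subseteq \alpha \Phi(V)$. Invoke the assumed $AM$-property of $Orth_c(X)$ to conclude $\Phi(B)^{\vee} \subseteq \alpha \Phi(V)$ with the same scalar $\alpha$. Because $\Phi$ is a lattice isomorphism, finite suprema commute with $\Phi$, so $\Phi(B^{\vee}) = \Phi(B)^{\vee} \subseteq \alpha \Phi(V) = \Phi(\alpha V)$. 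Finally, apply the injection $\Phi^{-1}$ to obtain $B^{\vee} \subseteq \alpha V$, which is precisely the $AM$-property for $X$.

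There is really no hard step here; the work was done in the previous theorem establishing that $\Phi$ is simultaneously a lattice isomorphism and a homeomorphism. The only point that needs a line of justification is that finite suprema are preserved by $\Phi$, namely $\Phi(B)^{\vee} = \Phi(B^{\vee})$, which is immediate from $\Phi$ being a lattice homomorphism and bijective. The mild pitfall to avoid is confusing the two directions: one needs $\Phi$ to send zero neighborhoods of $X$ to zero neighborhoods of $Orth_c(X)$ (openness of $\Phi$, which follows from $\Phi^{-1}$ being continuous), not the reverse. Given this, the proof is essentially verbatim the same as Corollary \ref{2}, with ``bounded uniformly on bounded sets'' replaced by ``bounded equicontinuously'', and could reasonably be compressed to a single remark citing the preceding theorem.
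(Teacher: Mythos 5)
Your proposal is correct and is exactly the argument the paper intends: the corollary is stated without proof precisely because the $AM$-property transfers verbatim through the $f$-algebra isomorphism and homeomorphism $u\mapsto T_u$ of the preceding theorem, just as in Corollary \ref{2} for $Orth_b(X)$. Your explicit verification (pushing $B\subseteq\alpha V$ forward, applying the $AM$-property of $Orth_c(X)$ to the bounded set $\Phi(B)$ and the zero neighborhood $\Phi(V)$, and pulling back via the lattice isomorphism) fills in the routine details the paper omits.
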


\begin{theorem}\label{3}
Suppose $X$ is a locally solid vector lattice. If $X$ possesses the Levi property, then so are $Orth_{n}(X)$, $Orth_{b}(X)$, and $Orth_{c}(X)$.
\end{theorem}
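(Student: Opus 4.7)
The plan is to construct the candidate supremum of a given bounded upward directed net of positive orthomorphisms pointwise via the Levi property of $X$, verify it is an orthomorphism of the required type, and then conclude that it is the least upper bound in the ambient space.

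First I would take a topologically bounded upward directed net $(T_{\alpha}) \subseteq Orth_{n}(X)_{+}$. By the definition of boundedness in $Orth_{n}(X)$, there is a solid zero neighborhood $U \subseteq X$ such that for every solid zero neighborhood $V \subseteq X$ there is $\beta_{V} > 0$ with $T_{\alpha}(U) \subseteq \beta_{V} V$ for every $\alpha$. Fix $x \in X_{+}$ and choose $\lambda > 0$ with $\lambda^{-1} x \in U$; then $(T_{\alpha}(x)) = (\lambda T_{\alpha}(\lambda^{-1} x))$ is a $\tau$-bounded upward directed subset of $X_{+}$, so the Levi property of $X$ produces $T(x) := \sup_{\alpha} T_{\alpha}(x)$ in $X_{+}$. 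I then extend to all of $X$ by $T(x) = T(x_{+}) - T(x_{-})$.

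Next I would check that $T \in Orth(X)$. Linearity on $X_{+}$ (hence on $X$) follows from additivity of suprema along directed nets; band preservingness uses that $T_{\alpha}(x) \perp y$ for every $\alpha$ whenever $x \perp y$, combined with the infinite distributive identity $(\sup_{\alpha} a_{\alpha}) \wedge y = \sup_{\alpha}(a_{\alpha} \wedge y)$ in Archimedean vector lattices, giving $T(x) \wedge y = 0$; order boundedness is immediate from positivity. By construction $T \ge T_{\alpha}$ for every $\alpha$, and if $S \in Orth_{n}(X)$ satisfies $S \ge T_{\alpha}$ for all $\alpha$, then $S(x) \ge \sup_{\alpha} T_{\alpha}(x) = T(x)$ on $X_{+}$; so once $T$ is known to lie in $Orth_{n}(X)$ it will automatically be the least upper bound of the net.

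The decisive step is to verify that $T$ itself belongs to $Orth_{n}(X)$, i.e.\ that $T(U)$ is bounded in $X$. For a given solid zero neighborhood $V$ and each $x \in U_{+}$, the estimates $T_{\alpha}(x) \in \beta_{V} V$ valid for every $\alpha$ must be transferred to $T(x) = \sup_{\alpha} T_{\alpha}(x)$; this is where the solidity of $V$ together with the monotonicity and positivity of the net $(T_{\alpha}(x))$ must be exploited, and I expect this to be the main technical obstacle. The $Orth_{b}(X)$ and $Orth_{c}(X)$ cases should then follow the same blueprint with minor adjustments: for $Orth_{b}$, replace $U$ by an arbitrary bounded subset of $X$, passing to its solid hull where required; for $Orth_{c}$, rephrase the estimates in the equicontinuous sense, carrying along the free scalar $\varepsilon$ that appears in the definition of that topology.
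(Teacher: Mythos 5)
Your construction is the same as the paper's: define $T(x)=\sup_\alpha T_\alpha(x)$ on $X_+$ via the Levi property, check additivity, extend to $X$, and verify band preservation through the distributive law $(\bigvee_\alpha T_\alpha(x))\wedge|y|=\bigvee_\alpha(T_\alpha(x)\wedge|y|)$. Up to that point the argument is sound. But you have not actually proved the statement: the step you yourself identify as decisive --- showing that $T(U)$ is bounded, i.e.\ passing from $T_\alpha(x)\in\beta_V V$ for all $\alpha$ to $T(x)=\sup_\alpha T_\alpha(x)\in\beta_V V$ --- is left as ``the main technical obstacle,'' and the tools you propose for it would not close it. Solidity of $V$ goes in the wrong direction here: it lets you descend from an element of $V$ to anything smaller in modulus, whereas $T(x)$ dominates every $T_\alpha(x)$, so solidity combined with positivity and monotonicity tells you nothing about whether the supremum stays in $\beta_V V$. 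A solid set need not contain the suprema of increasing nets that it contains.

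What is actually needed is \emph{order closedness} of the neighborhood: since $T_\alpha(x)\uparrow T(x)$, the net $(T_\alpha(x))$ order converges to $T(x)$, so if $V$ is an order closed zero neighborhood and $T_\alpha(U)\subseteq\gamma V$ for all $\alpha$, then $T(U)\subseteq\gamma V$. This is precisely how the paper finishes: it tests boundedness of $T(U)$ only against order closed zero neighborhoods (implicitly working with a Fatou-type basis). Without replacing your appeal to solidity by an appeal to order closed neighborhoods, the proof does not go through; with that replacement, your argument coincides with the paper's for $Orth_n(X)$, and the same repair is needed in your sketches for $Orth_b(X)$ and $Orth_c(X)$.
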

\begin{proof}
We prove the result for $Orth_{n}(X)$; the proofs for other cases are similar. Assume that $(T_{\alpha})$ is a bounded increasing net in ${Orth_{n}(X)}_{+}$. The general idea follows from the proof of \cite[Theorem 2.15]{Z1}. This implies that there is a zero neighborhood $U\subseteq X$ such that $(T_{\alpha}(U))$ is bounded for each $\alpha$. So, for each $x\in X_{+}$, the net $(T_{\alpha}(x))$ is bounded and increasing in $X_{+}$ so that it has a supremum, namely, $\alpha_x$. Define $T:X_{+}\to X_{+}$ via $T(x)=\alpha_x$. It is an additive map; it is easy to see that $\alpha_{x+y}\leq \alpha_x+\alpha_y$. For the converse, fix any $\alpha_0$. For each $\alpha\geq\alpha_0$, we have $T_{\alpha}(x)\leq \alpha_{x+y}-T_{\alpha}(y)\leq \alpha_{x+y}-T_{\alpha_0}(y)$ so that $\alpha_x\leq \alpha_{x+y}-T_{\alpha_0}(y)$. Since $\alpha_0$ was arbitrary, we conclude that $\alpha_{x}+\alpha_{y}\leq\alpha_{x+y}$. By \cite[Theorem 1.10]{AB}, it extends to a positive operator $T:X\to X$. It is clear that $T$ is also order bounded. We need show that it is band preserving. By \cite[Theorem 2.36]{AB}, it is sufficient to prove that $x\bot y$ implies that $Tx\bot y$. Note that each $T_{\alpha}$ is band preserving so that
\[|T(x)|\wedge |y|= (\bigvee_{\alpha}|T_{\alpha}(x)|)\wedge |y|=\bigvee_{\alpha}(|T_{\alpha}(x)|\wedge |y|)=0.\]
Suppose $V$ is an arbitrary order closed zero neighborhood in $X$. There is a positive scalar $\gamma$ with $T_{\alpha}(U)\subseteq \gamma V$. This means that $T(U)\subseteq \gamma V$ since $V$ is order closed.
\end{proof}
\begin{remark}
Observe that the converse of Theorem \ref{3} is not true, in general. Consider the Banach lattice $E$ consists of all piecewise linear continuous functions on $[0,1]$. By \cite[Theorem 6]{Za}, $Orth_{n}(E)=Orth(E)$ is the space $\{\alpha I: \alpha\in \Bbb R\}$, where $I$ denotes the identity operator on $E$. Clearly $Orth_{n}(E)$ possesses the Levi property but $E$ fails to have the Levi property ( it is not even order complete). Moreover, this example also shows that the Lebesgue property does not transfer from the space of all bounded orthomorphisms into the underlaying space, as well.
\end{remark}
Nevertheless, when we deal with the locally solid $f$-algebras with unit, compatible with Theorem \ref{4}, we have good news.
\begin{corollary}
Suppose $X$ is a locally solid $f$-algebra with a multiplication unit $e$. If either $Orth_b(X)$ or $Orth_{c}(X)$ possesses the Levi property, then so is $X$.
\end{corollary}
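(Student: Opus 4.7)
The plan is to transport the Levi property across the $f$-algebra isomorphism homeomorphism $\Phi\colon X\to Orth_{b}(X)$ of Theorem \ref{4} (respectively, its analogue $\Phi\colon X\to Orth_{c}(X)$). The key point is that $\Phi$ carries positivity, upward directedness, and existence of suprema in both directions because it is a vector lattice isomorphism, while it carries bounded sets to bounded sets in both directions because it is a linear homeomorphism of locally solid vector lattices. Together these observations reduce the Levi property for $X$ to the corresponding property for the operator space, which is exactly the hypothesis.

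Concretely, I would fix a $\tau$-bounded, upward directed net $(u_{\alpha})\subseteq X_{+}$ and set $T_{\alpha}:=\Phi(u_{\alpha})=T_{u_{\alpha}}$, so that $T_{u_{\alpha}}(x)=u_{\alpha}x$. Since $\Phi$ is a positive lattice isomorphism, the net $(T_{\alpha})$ is upward directed in the positive cone of $Orth_{b}(X)$ (or $Orth_{c}(X)$); since $\Phi$ is a homeomorphism, the image of a $\tau$-bounded set is bounded in the ambient operator topology. Applying the assumed Levi property of $Orth_{b}(X)$ (respectively $Orth_{c}(X)$) produces a supremum $S$ of $(T_{\alpha})$, and surjectivity of $\Phi$ yields $u\in X_{+}$ with $S=T_{u}$. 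Because $\Phi^{-1}$ is again a vector lattice isomorphism it commutes with suprema, whence $u=\sup_{\alpha} u_{\alpha}$ in $X$, confirming the Levi property.

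I do not expect a real obstacle: the only non-trivial input is Theorem \ref{4} (and its $Orth_{c}$-version), which has already been established. The one line worth stating explicitly is that a linear homeomorphism between topological vector spaces preserves and reflects bounded sets, so that the hypothesised boundedness of $(u_{\alpha})$ in $X$ is equivalent to boundedness of $(T_{u_{\alpha}})$ in the operator topology; once this is noted, the argument is a direct diagram chase through $\Phi$.
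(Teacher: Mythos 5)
Your proof is correct and is exactly the argument the paper intends: the corollary is stated without a written proof, as an immediate consequence of Theorem \ref{4} (and its $Orth_c$ analogue), and your transport of bounded, upward directed nets and their suprema through the $f$-algebra isomorphism homeomorphism is precisely that reduction. No gaps.
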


\begin{corollary}
Suppose $X$ is a locally solid $f$-algebra with a multiplication unit $e$. If either $Orth_b(X)$ or $Orth_{c}(X)$ possesses the Lebesgue property, then so is $X$.
\end{corollary}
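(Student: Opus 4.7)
The plan is to deduce the corollary directly from the $f$-algebra isomorphism homeomorphism $\Phi\colon X\to Orth_b(X)$, $u\mapsto T_u$, of Theorem \ref{4} (and its analogue for $Orth_c(X)$). Since $\Phi$ is an $f$-algebra isomorphism between Archimedean $f$-algebras, it is automatically a lattice isomorphism, and since it is a homeomorphism, it transports both order and topological convergence in both directions. So the strategy is simply to push the Lebesgue property through $\Phi^{-1}$.

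Concretely, assume $Orth_b(X)$ has the Lebesgue property, and let $(u_\alpha)\subseteq X$ with $u_\alpha\downarrow 0$. First, because $\Phi$ is a lattice isomorphism, the net $(T_{u_\alpha})$ in $Orth_b(X)$ satisfies $T_{u_\alpha}\downarrow 0$. Next, by the hypothesized Lebesgue property, $T_{u_\alpha}\to 0$ in the topology of uniform convergence on bounded sets. Finally, since $\Phi$ is a homeomorphism, applying $\Phi^{-1}$ yields $u_\alpha\to 0$ in the original topology of $X$, which is exactly the Lebesgue property for $X$. The case of $Orth_c(X)$ is identical, using the $f$-algebra isomorphism homeomorphism $X\cong Orth_c(X)$ established in the theorem following Theorem \ref{4}.

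There is no real obstacle: the entire content sits in Theorem \ref{4} (and its $Orth_c$ analogue). The only small point worth mentioning explicitly in the writeup is why $\Phi$ preserves the relation $u_\alpha\downarrow 0$; this is because an $f$-algebra isomorphism between Archimedean $f$-algebras preserves the lattice operations, hence in particular preserves order and infima. Given the brevity, this corollary is most naturally stated as an immediate consequence of the two representation theorems, parallel to the preceding corollary about the Levi property.
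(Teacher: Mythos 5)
Your proposal is correct and follows exactly the route the paper intends: the corollary is stated without proof as an immediate consequence of Theorem \ref{4} and its $Orth_c(X)$ analogue, and your transport of $u_\alpha\downarrow 0$ through the lattice isomorphism and of the topological convergence back through the homeomorphism is precisely that argument. No issues.
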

Recall that a linear operator $T$ between vector lattices $X$ and $Y$ is said to be disjoint preserving provided that for each $x,y\in X$ with $x\bot y$, we have $T(x)\bot T(y)$. For more information, see \cite{AB}.
\begin{lemma}
Suppose $X,Y$ are locally solid vector lattices such that $Y$ is Hausdorff and $(T_{\alpha})$ is a net of disjoint preserving operators from $X$ into $Y$ which is convergent pointwise to the operator $T$. Then $T$ is also disjoint preserving.
\end{lemma}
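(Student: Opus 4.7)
The plan is to exploit two standard features of locally solid vector lattices: the joint continuity of the lattice operations, and (since $Y$ is Hausdorff) the uniqueness of limits of nets.

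First I would fix $x,y\in X$ with $x\perp y$, i.e.\ $|x|\wedge|y|=0$, and unpack what disjoint preservation of each $T_{\alpha}$ gives us: namely $|T_{\alpha}(x)|\wedge |T_{\alpha}(y)|=0$ for every index $\alpha$. The goal is to show $|T(x)|\wedge|T(y)|=0$ in $Y$.

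Next I would invoke pointwise convergence to get $T_{\alpha}(x)\to T(x)$ and $T_{\alpha}(y)\to T(y)$ in the topology of $Y$. Because $Y$ is locally solid, the maps $u\mapsto |u|$ and $(u,v)\mapsto u\wedge v$ are (uniformly) continuous (this is the content of \cite[Theorem 2.17]{AB1}, already used in the earlier propositions of the excerpt), so
\[
|T_{\alpha}(x)|\wedge|T_{\alpha}(y)|\longrightarrow |T(x)|\wedge|T(y)|.
\]
The left-hand side is the constant net $0$, whose only possible limit is $0$; since $Y$ is Hausdorff the limit is unique, and therefore $|T(x)|\wedge|T(y)|=0$, which is exactly $T(x)\perp T(y)$.

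There is no real obstacle here; the only two subtle points to flag are that (a) one genuinely needs the locally solid structure on $Y$ to deduce continuity of $\wedge$ and $|\cdot|$, and (b) one genuinely needs Hausdorffness to identify the limit $|T(x)|\wedge|T(y)|$ with $0$ (otherwise we would only conclude that $|T(x)|\wedge|T(y)|$ lies in the closure of $\{0\}$). Both hypotheses appear in the statement for precisely this reason.
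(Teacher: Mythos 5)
Your argument is correct and is essentially the paper's own proof in slightly more packaged form: the paper fixes a solid zero neighborhood $W$ and runs the Birkhoff inequality explicitly to show $|T(x)|\wedge|T(y)|\in W$ for every such $W$, which is precisely the proof of the uniform continuity of the lattice operations that you invoke as a black box via \cite[Theorem 2.17]{AB1}. Both routes then conclude by Hausdorffness, so there is nothing to add.
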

\begin{proof}
Choose arbitrary solid zero neighborhood $W\subseteq Y$; there exists a zero neighborhood $V\subseteq Y$ with $V+V\subseteq W$. Consider $x,y\in X$ with $x\bot y$. Find an index $\alpha_0$ with $(T_{\alpha_0}-T)(x)\in V$ and $(T_{\alpha_0}-T)(y)\in V$. We claim that $T(x)\bot T(y)$. By the Birkhoff's inequality, we have
\[0\leq||T(x)|\wedge |T(y)|-|T_{\alpha_0}(x)|\wedge|T_{\alpha_0}(y)||\]
\[=||T(x)|\wedge |T(y)|-|T_{\alpha_0}(x)|\wedge |T(y)|-|T_{\alpha_0}(x)\wedge |T(y)|-T_{\alpha_0}(x)|\wedge|T_{\alpha_0}(y)||\]
\[\leq |T_{\alpha_0}(x)- T(x)|+|T_{\alpha_0}(y)- T(y)|\in V+V=W.\]
Since $T_{\alpha_0}$ is disjoint preserving, $|T_{\alpha_0}(x)|\wedge|T_{\alpha_0}(y)|=0$. Since $W$ is solid, we conclude that $|T(x)|\wedge |T(y)|\in W$. This happens for each arbitrary solid zero neighborhood $W\subseteq Y$. Therefore, $|T(x)|\wedge |T(y)|=0$ as claimed.
\end{proof}
\begin{corollary}\label{7}
Suppose $X$ is a Hausdorff locally solid vector lattice and $(T_{\alpha})$ is a net of band preserving operators on $X$ which is convergent pointwise to the operator $T$. Then $T$ is also band preserving.
\end{corollary}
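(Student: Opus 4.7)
The plan is to mimic the argument of the preceding lemma in a simpler form, exploiting that band preservation requires only $T(x)\bot y$ (with $y$ held fixed) rather than $T(x)\bot T(y)$, so a single application of the triangle inequality for the modulus will suffice. Fix $x,y\in X$ with $x\bot y$; the goal is to show $|T(x)|\wedge|y|=0$. Since $X$ is Hausdorff, it is enough to place $|T(x)|\wedge|y|$ inside an arbitrary solid zero neighborhood $W\subseteq X$.

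To do so, I would first note that band preservation of each $T_\alpha$ gives $|T_\alpha(x)|\wedge|y|=0$ for every $\alpha$. An application of Birkhoff's inequality then yields
$$\bigabs{|T(x)|\wedge|y|-|T_\alpha(x)|\wedge|y|}\le |T(x)-T_\alpha(x)|,$$
and because the subtracted term vanishes, the left-hand side collapses to $|T(x)|\wedge|y|$. Pointwise convergence $T_\alpha(x)\to T(x)$ then furnishes an index $\alpha_0$ with $T(x)-T_{\alpha_0}(x)\in W$; by solidity of $W$ we obtain $|T(x)|\wedge|y|\in W$. Since $W$ was arbitrary and $X$ is Hausdorff, $|T(x)|\wedge|y|=0$, so $T$ is band preserving.

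No substantial obstacle is anticipated: the only point to watch is the solidity/Hausdorffness bookkeeping when passing the identity $|T_\alpha(x)|\wedge|y|=0$ to the limit, which is exactly the ingredient that makes the analogous step work in the preceding lemma. A slightly more conceptual variant would observe that the disjoint complement $\{y\}^{d}=\{z\in X\mid |z|\wedge|y|=0\}$ is closed in any Hausdorff locally solid vector lattice (by continuity of the lattice operations), and that band preservation is precisely the inclusion $T_\alpha(\{y\}^{d})\subseteq\{y\}^{d}$, which then passes to the pointwise limit; either formulation yields the corollary with essentially no extra work beyond what the lemma has already established.
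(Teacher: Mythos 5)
Your proof is correct and follows essentially the same route as the paper, which obtains the corollary from the preceding lemma whose proof is exactly this Birkhoff-inequality-plus-solidity-plus-Hausdorff argument. Your direct version is in fact slightly cleaner: since band preservation asks for $T(x)\bot y$ with $y$ fixed (rather than $T(x)\bot T(y)$, which is what the lemma literally delivers), estimating $\bigl||T(x)|\wedge|y|-|T_{\alpha_0}(x)|\wedge|y|\bigr|\le|T(x)-T_{\alpha_0}(x)|$ in a single step is exactly the right adaptation.
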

Observe that, in general, the uniform limit of a sequence of order bounded operators need not be order bounded ( see \cite[Example 5.6]{AB}, due to Krengle); note that, in the example, each $K_n$ is order bounded and $K_n\rightarrow T$ uniformly, nevertheless, we can choose the coefficients $(\alpha_n)$ such that $T$ is not even order bounded ( the modulus does not exist). Furthermore, $K_n\uparrow T$, but $T$ is not order bounded, as mentioned. Consider this point that in the example, the underlying space $E$, is not an $AM$-space. Now, we focus on locally solid vector lattices. Compatible with Corollary \ref{7}, \cite[Corollary 2.7]{Z1} and \cite[Lemma 3.1 and Lemma 3.2]{HMZ}, we have the following.
\begin{corollary}
Suppose $X$ is a Hausdorff locally solid vector lattice with the Levi and $AM$-properties. Then, $Orth_{b}(X)$ and $Orth_{c}(X)$, with respect to the assumed topologies, are topologically complete. Moreover, they form bands in $Orth(X)$.
\end{corollary}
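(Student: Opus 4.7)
The plan is to treat $Orth_{b}(X)$ and $Orth_{c}(X)$ as subspaces of the ambient spaces $B_{b}(X)$ and $B_{c}(X)$ of all $bb$-bounded, respectively all continuous, operators on $X$, and then reduce the statement to the quoted results for those ambient spaces combined with Corollary \ref{7}. Under the hypotheses on $X$ (Hausdorff with the Levi and $AM$-properties), \cite[Corollary 2.7]{Z1} yields that $B_{b}(X)$ and $B_{c}(X)$ are topologically complete, while \cite[Lemma 3.1 and Lemma 3.2]{HMZ} yield that they are bands in $L_b(X)$.

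For topological completeness, the first step is to show that $Orth_{b}(X)$ is topologically closed in $B_{b}(X)$, and similarly for $Orth_{c}(X)$ in $B_{c}(X)$. Both operator topologies are finer than pointwise convergence on $X$, so Corollary \ref{7} applies directly: any convergent net of band-preserving operators has a band-preserving limit. Since a topologically closed subspace of a topologically complete space is itself topologically complete, the first assertion follows.

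For the band property in $Orth(X)$, the ideal structure is already supplied by the domination proposition earlier in the section, so only order closedness needs to be verified. Here I would exploit the pointwise description of the lattice operations on $Orth(X)$ given by \cite[Theorem 2.40]{AB}: order convergence in $Orth(X)$ translates into pointwise order convergence in $X$. Combined with the fact that $B_{b}(X)$ and $B_{c}(X)$ are bands in $L_b(X)$, an order convergent net from $Orth_{b}(X)$ (resp.\ $Orth_{c}(X)$) therefore has its limit again in $B_{b}(X)\cap Orth(X)=Orth_{b}(X)$ (resp.\ $B_{c}(X)\cap Orth(X)=Orth_{c}(X)$).

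The hard step I expect is the harmonization of the three order structures on $Orth(X)$, on $L_b(X)$, and on the operator subspaces: one must ensure that order convergence inside $Orth(X)$ really is compatible with the band structure of $B_{b}(X)$ inside $L_b(X)$, since order convergence in the larger ambient lattice could a priori be a weaker notion. The pointwise lattice operations on $Orth(X)$, together with the uniform control of suprema of bounded families of operators supplied by the $AM$-property via Theorem \ref{1}, are what should allow this transfer to go through cleanly.
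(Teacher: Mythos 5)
The paper offers no written proof of this corollary beyond citing Corollary \ref{7}, \cite[Corollary 2.7]{Z1}, and \cite[Lemma 3.1 and Lemma 3.2]{HMZ}, and your proposal is precisely the natural way to assemble those three ingredients: closedness of the orthomorphisms inside the complete ambient operator spaces via pointwise preservation of the band-preserving property, and the band statement via intersecting with the band $Orth(X)$ in $L_b(X)$. This is essentially the same approach the paper intends, so no further comparison is needed.
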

\begin{remark}
Note that we are able to consider some results presented for $Orth_{b}(X)$ and $Orth_{c}(X)$, for $Orth_{n}(X)$, as well. Just, observe that $Orth_{n}(X)$ is not topologically complete as shown in \cite[Example 2.22]{Tr}.
\end{remark}
\end{document}